\def\endproofbox{\hskip 1.3em\hfill\rule{6pt}{6pt}}
\newenvironment{proof}%
{%
\noindent{\it Proof.}
}%
{%
 \quad\hfill\endproofbox\vspace*{2ex}
}
 \newcommand{\rmv}[1]{}
 \def \cH {{\cal H}}
 \def\a{\alpha}
 \def\S{\Sigma}
 \def\sI{\mathscr{I}}
 \def\sM{\mathscr{M}}
\newtheorem{thm}{Theorem}[section]
\newtheorem{lemma}[thm]{Lemma}
\newtheorem{cor}[thm]{Corollary}
\def\endproofbox{\hskip 1.3em\hfill\rule{6pt}{6pt}}
\newenvironment{proof}%
{%
\noindent{\it Proof.}
}%
{%
 \quad\hfill\endproofbox\vspace*{2ex}
}
\def\to{\rightarrow}
\def\toto{\longrightarrow}
\def\IFFF{\Longleftrightarrow}
 \def\wt{\mbox{wt}}
 \def\imp{\mbox{imp}}
 \def\BAL{\textsf{BAL}}
\def\R{0.14} 
\def\Thickness{1.8pt} 
\def\Hzero{0.0} 
\def\Hone{0.2} 
\def\Htwo{0.4}
\def\Hthree{0.6}
\def\Hfour{0.8}
\def\LINETYPE{*-*}
\def\PicWidth{5} 
\def\PicHeight{1.4} 
\def\LabelSpacing{-0.4} 
\def\ClusterColor{lightgray}
\def\SideColor{gray}
\def\RootColor{black}
\def\Wbtwo{.5}
\def\Wbone{1}
\def\Wmid{1.5}
\def\Wpone{2}
\def\Wptwo{2.5}
\def\PicWidthL{3.7}
\def\ThicknessTwo{1.2pt}
\def\zero{0}
\def\one{0.3}
\def\two{.6}
\def\three{.9}
\def\four{1.2}
\def\five{1.5}
\def\six{1.8}
\def\seven{2.1}
\def\eight{2.4}
\def\nine{2.7}
\def\ten{3.0}
\def\eleven{3.3}
\def\twelve{3.6}
\def\thirteen{3.9}
\def\fourteen{4.2}
\def\fifteen{4.5}
 \author{Jeffery J. Beyerl
 \thanks{Department of Mathematics, Clemson University, Clemson, SC 29634-0975
 \mbox{ email: \textsf{jbeyerl@clemson.edu}}}
 \and
 Robert E. Jamison
 \thanks{Department of Mathematics, Clemson University, Clemson, SC 29634-0975
 \mbox{ email: \textsf{rejam@clemson.edu}}}
 \thanks{Affiliated Professor, University of Haifa}
 }
 \title{Interval Graphs with \\ Containment Restrictions}
\begin{document}

     \date{}

 \maketitle


 \begin{abstract}
 An interval graph is {\em proper} iff it has a representation in which
 no interval contains another.  Fred Roberts \cite{R} characterized the proper interval graphs
 as those containing no induced star $K_{1,3}$.  Proskurowski and
 Telle \cite{PT} have studied $q$-proper graphs, which are interval graphs
 having a representation in which no interval is properly contained
 in more than $q$ other intervals. Like Roberts they found that their classes of graphs
 where characterized, each by a single minimal forbidden subgraph. This paper initiates the study of
 $p$-{\em improper interval graphs} where no interval contains
 more than $p$ other intervals.  This paper will focus on a special
 case of $p$-improper interval graphs for which the minimal
 forbidden subgraphs are readily described.  Even in this case, it
 is apparent that a very wide variety of minimal
 forbidden subgraphs are possible.
 \end{abstract}


    \section{Introduction}  \label{intro}

 A finite, simple graph $G=(V,E)$ is an {\em interval graph} iff there is
 an assignment $\a: v \toto I_v$ of vertices $v$ of $G$ to intervals $I_v$
 on the real line such that $vw \in E \IFFF I_v \cap I_w \neq \emptyset$.
 Interval graphs appear to have first been discussed by Hajos \cite{H}.
 Now classical and well-known characterizations of interval graphs were given by
 Lekkerkerker and Boland \cite{LB} in 1962 and Gilmore and Hoffman \cite{GH} in 1964.
 Extensive investigations and generalizations have since followed
 \cite{CJL,CH,Coz,EFKS,G,GJ-dm,GJ-jct,GJ,GJT,GM, GMT,GT,JLL,JMcMM,JMS, JMdm,JMcn,JM05,K,PT,R,S,T}.
 An interval graph is {\em proper} iff it has a representation in which
 no interval contains another. Roberts \cite{R} introduced proper interval graphs
 and characterized them as interval graphs containing no $K_{1,3}$.
 Proskurowski and Telle \cite{PT} generalized this to {\em $q$-proper interval graphs}, graphs having an interval
 representation in which no interval is properly contained in more than $q$ others.

 This paper will forbid containments in the opposite direction. A {\em $p$-improper interval graph}
 is one having an interval representation in which no interval contains more than $p$ other intervals.
 The key difference between these generalizations is that Proskurowski and Telle \cite{PT} forbid
 {\it supersets} whereas here {\it subsets} are forbidden.

 By a {\it $p$-improper representation} we mean an interval
 representation with no interval containing more than $p$ other intervals.
 Obviously, if $G$ has such a representation and $H$ is a subgraph of $G$,
 then deleting from a representation of $G$ those intervals
 which correspond to vertices not in $H$ yields a representation of $H$.
 This hereditary property guarantees that the class $\sI_p$ of $p$-improper interval graphs
 has a minimal forbidden subgraph characterization. The class of proper interval graphs
 (which coincides with the class of unit interval \cite{GT}) is thus the class $\sI_0$.

 The Lekkerkerker-Boland theorem \cite{LB} says that chordless cycles
 and asteroidal triples form a defining class of forbidden subgraphs
 for the class of interval graphs.  Thus we will be interested in
 finding minimal forbidden subgraphs within the class of interval graphs.
 Let $\sM_p$ denote the set of {\it minimal forbidden interval
 subgraphs (MFISG)} for the class $\sI_p$ of $p$-improper interval graphs.
 The {\em impropriety} $\imp(G)$ of $G$ is the smallest $p$ such that
 $G$ has a $p$-improper representation.  Unlike the case of $q$-proper interval graphs which have an
 essentially unique MFISG for each $q$, $p$-improper interval graphs show
 a great diversity of MFISGs, as we will see below.
 Fig. \ref{fig:main} shows a complete list of the MFISGs for the
 first class $\sI_1$ with $p=1$ \cite{BJ}.  These ten MFISGs show the breadth of possibilities right at the beginning.
 The star $K_{1,p+3}$ is easily seen to be a MFISG for $\sI_p$.  This is the easiest case.
 The next easiest case is the {\em balanced} case which includes three examples from Fig. \ref{fig:main}.
 We will give a formal definition of balanced here and give a complete description of all MFISGs in this case.


 \section{Weight and Balance in Interval Graphs}

 Throughout this section $G = (V,E)$ will denote a finite, connected, interval graph.
 First we establish the notation for the central ideas of the paper.
 Recall that a finite, simple graph $G=(V,E)$ is an {\em interval graph} iff there is
 an assignment $\a: v \toto I_v$ of vertices $v$ of $G$ to intervals $I_v$
 on the real line such that $vw \in E \IFFF I_v \cap I_w \neq \emptyset$.
 If a representation $\a$ has been given, $\ell_v$ and $r_v$ will
 denote the left and right endpoints, resp., of the interval $I_v$ representing $v$.
 The {\em support} of a set $W \subseteq V$ of vertices in a
 representation $\a$ is the union of all intervals $I_w$ where $w \in W$.
 The {\em impropriety} $\imp_{\a}(z)$ of a vertex $z$ of $G$ with
 respect to the representation $\a$ is the number of representing
 intervals which lie inside $I_z$ (not counting $I_z$ itself).
 The {\em impropriety} $\imp(\a)$ of the representation $\a$ is the
 {\bf maximum} of the improprieties $\imp_{\a}(z)$ over all vertices $z$ of $G$.
 The {\em impropriety} $\imp(G)$ of $G$ is the {\bf minimum} of $\imp(\a)$
 over all representations.  A representation which minimizes the
 impropriety will be called a {\em minimal representation}.
 That is, a representation $\a$ is minimal iff $\imp(\a) = \imp(G)$.

 For $z \in V$, a component of $G \setminus \{z\}$ will be called a
 {\em local component} at $z$ (or more simply, just a {\em component at} $z$).
 A local component is {\em exterior} iff it contains a vertex not adjacent to $z$.

 \begin{lemma}
 \label{lkkk}
 A vertex $z$ in an interval graph can have at most two exterior (local) components.
 \end{lemma}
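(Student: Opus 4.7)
The plan is to fix a representation $\a$ of $G$ and use the linear geometry of the intervals to show that any exterior component can be tagged as ``left'' or ``right'' of $I_z$, with at most one exterior component of each type.

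First I would observe that since $G$ is connected, every local component at $z$ must contain at least one vertex adjacent to $z$. Next, if $v$ is any non-neighbor of $z$, then $I_v\cap I_z=\emptyset$, so $I_v$ lies entirely in $(-\infty,\ell_z)$ or entirely in $(r_z,\infty)$. I would call an exterior component $C$ \emph{left-exterior} if it contains a non-neighbor $v$ of $z$ with $I_v\subset(-\infty,\ell_z)$, and \emph{right-exterior} analogously; by definition every exterior component is of at least one of these two types.

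The core step is the claim that there is at most one left-exterior component (the right case being symmetric). Suppose for contradiction that $C_1\neq C_2$ are both left-exterior. Pick $v_i\in C_i$ with $I_{v_i}\subset(-\infty,\ell_z)$ and $w_i\in C_i$ with $I_{w_i}\cap I_z\neq\emptyset$. Since $C_i$ is connected in $G\setminus\{z\}$, there is a path $v_i=u_0^{(i)},u_1^{(i)},\dots,u_{k_i}^{(i)}=w_i$ of vertices in $C_i$ whose consecutive intervals intersect, so the union $U_i:=\bigcup_j I_{u_j^{(i)}}$ is a connected subset of $\RR$ that contains a point strictly less than $\ell_z$ (from $I_{v_i}$) and a point in $[\ell_z,r_z]$ (from $I_{w_i}$). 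Hence $\ell_z\in U_i$, and so some interval $I_{u_{j_i}^{(i)}}$ along the path contains $\ell_z$. Then $I_{u_{j_1}^{(1)}}\cap I_{u_{j_2}^{(2)}}\ni\ell_z$, so these two vertices are adjacent; but they lie in distinct components $C_1,C_2$ of $G\setminus\{z\}$, a contradiction.

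Combining the left and right cases gives at most two exterior components, completing the proof. The only mildly delicate step is the intermediate-value observation that some interval on the connecting path must cover $\ell_z$; everything else is straightforward bookkeeping, using only the definitions and the connectedness of $G$.
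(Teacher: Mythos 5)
Your proof is correct, but it takes a genuinely different route from the paper's. The paper's proof is a two-line appeal to the Lekkerkerker--Boland theorem: given three exterior components, pick a vertex at distance two from $z$ in each, and observe that these three vertices form an asteroidal triple, which is forbidden in an interval graph. Your argument instead works directly with an interval representation: every exterior component contains a non-neighbor of $z$, whose interval must lie wholly left of $\ell_z$ or wholly right of $r_z$, and the connectedness/intermediate-value argument along a path inside the component shows that two distinct ``left-exterior'' components would force an edge between them, a contradiction. Both are valid. The paper's version is shorter but leans on a cited black-box characterization; yours is elementary and self-contained, and it actually establishes slightly more structural information --- that the exterior components are the ones whose supports stick out past the two ends of $I_z$ --- which is essentially the picture the paper redevelops anyway in the proof of Theorem \ref{wt} (the leftmost/rightmost ordering of the supports of the local components). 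The one step worth stating explicitly is that every local component contains a neighbor of $z$ (so that $w_i$ exists); you do note this, and it follows from the standing connectedness assumption on $G$ in this section.
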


 \begin{proof}
 If there are three exterior components $C_1, C_2, C_3$, choose vertices
 $a_1, a _2$, and $a_3$ at distance two to $z$ with $a_i \in C_i$.
 Then $a_1, a _2$, and $a_3$ form an asteroidal triple, which by
 \cite{LB} is forbidden in an interval graph.
 \end{proof}

 A vertex $z$ of $G$ is {\em type $k$} iff $v$ has exactly $k$ exterior components.
 By Lemma \ref{lkkk} $k$ can take on only three values: 0, 1, or 2.

 We now introduce a quantity which provides a lower bound on --- and sometimes an exact value for --- the impropriety.
 Suppose $z$ has $n$ local components $C_1, C_2, C_3, . . . , C_n$.
 The {\em weight} $\wt(z)$ of $z$ is the sum of the $n-2$
 smallest orders of the {\bf non-exterior} local components.
 The {\em weight} $\wt(G)$ of $G$ is the maximum of the weights of its vertices.
 Note that the weight is defined in terms of the graph $G$ directly
 and does not depend on any particular representation.  Impropriety,
 on the other hand, is defined in terms of representations of $G$.


 \begin{center}

   \begin{figure}
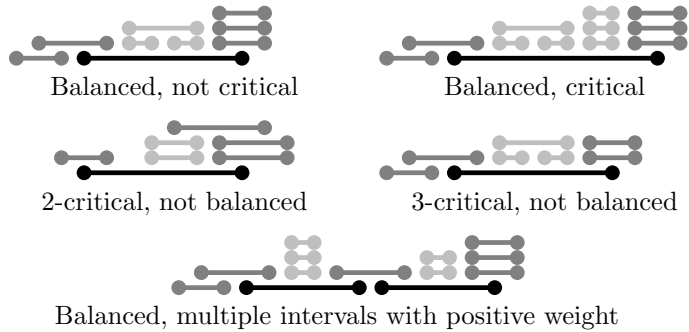

   \hspace{.35 in} 
   \begin{tabular}{c}

    \begin{tabular}{cc}
        \pspicture(\fifteen,\PicHeight)
            \rput(\seven,\LabelSpacing){Balanced, not critical}
            \psline[linewidth=\Thickness, linecolor=\SideColor]{\LINETYPE}(\zero,\Hzero)(\two,\Hzero)
            \psline[linewidth=\Thickness, linecolor=\SideColor]{\LINETYPE}(\one,\Hone)(\four,\Hone)
            \psline[linewidth=\Thickness, linecolor=\RootColor]{\LINETYPE}(\three,\Hzero)(\ten,\Hzero)
            \psline[linewidth=\Thickness, linecolor=\ClusterColor]{\LINETYPE}(\five,\Hone)(\six,\Hone)
            \psline[linewidth=\Thickness, linecolor=\ClusterColor]{\LINETYPE}(\seven,\Hone)(\eight,\Hone)
            \psline[linewidth=\Thickness, linecolor=\ClusterColor]{\LINETYPE}(\five,\Htwo)(\eight,\Htwo)
            \psline[linewidth=\Thickness, linecolor=\SideColor]{\LINETYPE}(\nine,\Hone)(\eleven,\Hone)
            \psline[linewidth=\Thickness, linecolor=\SideColor]{\LINETYPE}(\nine,\Htwo)(\eleven,\Htwo)
            \psline[linewidth=\Thickness, linecolor=\SideColor]{\LINETYPE}(\nine,\Hthree)(\eleven,\Hthree)
        \endpspicture

  &

        \pspicture(\thirteen,\PicHeight)
            \rput(\seven,\LabelSpacing){Balanced, critical}
            \psline[linewidth=\Thickness, linecolor=\SideColor]{\LINETYPE}(\zero,\Hzero)(\two,\Hzero)
            \psline[linewidth=\Thickness, linecolor=\SideColor]{\LINETYPE}(\one,\Hone)(\four,\Hone)
            \psline[linewidth=\Thickness, linecolor=\RootColor]{\LINETYPE}(\three,\Hzero)(\twelve,\Hzero)
            \psline[linewidth=\Thickness, linecolor=\ClusterColor]{\LINETYPE}(\five,\Hone)(\six,\Hone)
            \psline[linewidth=\Thickness, linecolor=\ClusterColor]{\LINETYPE}(\seven,\Hone)(\eight,\Hone)
            \psline[linewidth=\Thickness, linecolor=\ClusterColor]{\LINETYPE}(\five,\Htwo)(\eight,\Htwo)
            \psline[linewidth=\Thickness, linecolor=\ClusterColor]{\LINETYPE}(\nine,\Hone)(\ten,\Hone)
            \psline[linewidth=\Thickness, linecolor=\ClusterColor]{\LINETYPE}(\nine,\Htwo)(\ten,\Htwo)
            \psline[linewidth=\Thickness, linecolor=\ClusterColor]{\LINETYPE}(\nine,\Hthree)(\ten,\Hthree)
            \psline[linewidth=\Thickness, linecolor=\SideColor]{\LINETYPE}(\eleven,\Hone)(\thirteen,\Hone)
            \psline[linewidth=\Thickness, linecolor=\SideColor]{\LINETYPE}(\eleven,\Htwo)(\thirteen,\Htwo)
            \psline[linewidth=\Thickness, linecolor=\SideColor]{\LINETYPE}(\eleven,\Hthree)(\thirteen,\Hthree)
        \endpspicture

        \\
    \pspicture(\fifteen,\PicHeight)
    \rput(\seven,\LabelSpacing){2-critical, not balanced}
        \psline[linewidth=\Thickness, linecolor=\RootColor]{\LINETYPE}(\three,\Hzero)(\ten,\Hzero)
        \psline[linewidth=\Thickness, linecolor=\SideColor]{\LINETYPE}(\two,\Hone)(\four,\Hone)
        \psline[linewidth=\Thickness, linecolor=\ClusterColor]{\LINETYPE}(\six,\Hone)(\eight,\Hone)
        \psline[linewidth=\Thickness, linecolor=\ClusterColor]{\LINETYPE}(\six,\Htwo)(\eight,\Htwo)
        \psline[linewidth=\Thickness, linecolor=\SideColor]{\LINETYPE}(\seven,\Hthree)(\eleven,\Hthree)
        \psline[linewidth=\Thickness, linecolor=\SideColor]{\LINETYPE}(\nine,\Hone)(\twelve,\Hone)
        \psline[linewidth=\Thickness, linecolor=\SideColor]{\LINETYPE}(\nine,\Htwo)(\twelve,\Htwo)
    \endpspicture

   &

        \pspicture(\thirteen,\PicHeight)
            \rput(\seven,\LabelSpacing){3-critical, not balanced}
            \psline[linewidth=\Thickness, linecolor=\SideColor]{\LINETYPE}(\zero,\Hzero)(\two,\Hzero)
            \psline[linewidth=\Thickness, linecolor=\SideColor]{\LINETYPE}(\one,\Hone)(\four,\Hone)
            \psline[linewidth=\Thickness, linecolor=\RootColor]{\LINETYPE}(\three,\Hzero)(\ten,\Hzero)
            \psline[linewidth=\Thickness, linecolor=\ClusterColor]{\LINETYPE}(\five,\Hone)(\six,\Hone)
            \psline[linewidth=\Thickness, linecolor=\ClusterColor]{\LINETYPE}(\seven,\Hone)(\eight,\Hone)
            \psline[linewidth=\Thickness, linecolor=\ClusterColor]{\LINETYPE}(\five,\Htwo)(\eight,\Htwo)
            \psline[linewidth=\Thickness, linecolor=\SideColor]{\LINETYPE}(\nine,\Hone)(\eleven,\Hone)
            \psline[linewidth=\Thickness, linecolor=\SideColor]{\LINETYPE}(\nine,\Htwo)(\eleven,\Htwo)
        \endpspicture
        \\
        \end{tabular}

 \\
        \pspicture(\fifteen,\PicHeight)
            \rput(\seven,\LabelSpacing){Balanced, multiple intervals with positive weight}
            \psline[linewidth=\Thickness, linecolor=\SideColor]{\LINETYPE}(\zero,\Hzero)(\two,\Hzero)
            \psline[linewidth=\Thickness, linecolor=\SideColor]{\LINETYPE}(\one,\Hone)(\four,\Hone)
            \psline[linewidth=\Thickness, linecolor=\RootColor]{\LINETYPE}(\three,\Hzero)(\eight,\Hzero)
            \psline[linewidth=\Thickness, linecolor=\ClusterColor]{\LINETYPE}(\five,\Hone)(\six,\Hone)
            \psline[linewidth=\Thickness, linecolor=\ClusterColor]{\LINETYPE}(\five,\Htwo)(\six,\Htwo)
            \psline[linewidth=\Thickness, linecolor=\ClusterColor]{\LINETYPE}(\five,\Hthree)(\six,\Hthree)
            \psline[linewidth=\Thickness, linecolor=\SideColor]{\LINETYPE}(\seven,\Hone)(\ten,\Hone)
            \psline[linewidth=\Thickness, linecolor=\RootColor]{\LINETYPE}(\nine,\Hzero)(\fourteen,\Hzero)
            \psline[linewidth=\Thickness, linecolor=\ClusterColor]{\LINETYPE}(\eleven,\Hone)(\twelve,\Hone)
            \psline[linewidth=\Thickness, linecolor=\ClusterColor]{\LINETYPE}(\eleven,\Htwo)(\twelve,\Htwo)
            \psline[linewidth=\Thickness, linecolor=\SideColor]{\LINETYPE}(\thirteen,\Hone)(\fifteen,\Hone)
            \psline[linewidth=\Thickness, linecolor=\SideColor]{\LINETYPE}(\thirteen,\Htwo)(\fifteen,\Htwo)
            \psline[linewidth=\Thickness, linecolor=\SideColor]{\LINETYPE}(\thirteen,\Hthree)(\fifteen,\Hthree)
        \endpspicture

 \\

        \pspicture(\fifteen,.5)
        \endpspicture
    \end{tabular}
    \caption{Illustrations of Balance and Criticality}
    \label{fig:??}
  \end{figure}

   \end{center}


 Let us consider some examples of this somewhat confusing concept.
 Let $X_1$ and $X_2$ denote generic exterior components.
 Let $A,B,C,D,F$ denote local components with orders $A = 5, B = 5, C = 5, D = 4, F = 2$.

 \begin{center}
 \begin{tabular}{|l|l|l|l|}
 \hline
 Suppose the local          &Excluded   & The counted   &  Weight   \\
 components at $z$ are      &Loc Comp   & orders are    &           \\
 \hline
 $X_1,X_2,A,B$              & $X_1,X_2$ & 5+5           & 10        \\
 $X_1,X_2,C,F$              & $X_1,X_2$ & 5+2           & 7         \\
 $X_1,A,B$                  & $X_1,A$   & 5             & 5         \\
 $X_1,C,F$                  & $X_1,C$   & 2             & 2         \\
 $A,B,C,D,F$                & $A,B$     & 5+4+2         & 11        \\
 $C,D,F$                    & $C,D$     & 2             & 2         \\
 \hline
 \end{tabular}
 \end{center}

 \begin{center}
 \begin{tabular}{|l|c|l|l|}
 \hline
 Suppose the local          & Nr. of & $n-2$ smallest    &  Weight   \\
 components at $z$ are      & Comp. & Non-Exterior       &           \\
                            &       & Local Comp.        &           \\

 \hline
 $X_1,X_2,A,B,C,D,F$        & $n = 7$   & $A,B,C,D,F$   & 5+5+5+4+2 = 21        \\
 $X_1,X_2,C,F$              & $n = 4$   & $C,F$         & 5+2 = 7               \\
 $X_1,X_2$                  & $n = 2$   & none          & 0                     \\
 $X_1,A,B,C$                & $n = 4$   & $B,C$         & 5+5 = 10              \\
 $X_1,C,F$                  & $n = 3$   & $F$           & 2                     \\
 $A,B,C,D,F$                & $n = 5$   & $C,D,F$       & 5+4+2 = 11            \\
 $C,D,F$                    & $n = 3$   & $F$           & 2                     \\
 $C,F$                      & $n = 2$   & none          & 0                     \\
 $D$                        & $n = 1$   & none          & 0                     \\
 \hline
 \end{tabular}
 \end{center}

 \begin{thm}
 \label{wt}
 If $z$ is any vertex of an interval graph $G$, the impropriety of
 $G$ is at least the weight of $z$.
 \end{thm}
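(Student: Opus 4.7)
Fix an arbitrary interval representation $\alpha$ of $G$, let $z$ have local components $C_1, \ldots, C_n$, and write $S_i$ for the support of $C_i$. The plan is to show $\imp_\alpha(z) \geq \wt(z)$ for every such $\alpha$; minimizing over $\alpha$ then yields the theorem. The strategy is to identify a large subfamily of the $C_i$ whose intervals lie entirely inside $I_z$ and thus contribute fully to $\imp_\alpha(z)$.

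Three preliminary observations drive the argument. First, each $S_i$ is a connected subset of $\RR$, since any two vertices of $C_i$ are joined by a path of pairwise-intersecting intervals in $G \setminus \{z\}$. Second, distinct supports are disjoint: if $p \in S_i \cap S_j$, then some $u \in C_i$ and $w \in C_j$ both contain $p$, producing an edge $uw$ in $G \setminus \{z\}$ and forcing $C_i = C_j$. Third, because $G$ is connected, each $C_i$ contains at least one vertex adjacent to $z$, so each $S_i$ meets $I_z$.

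The key geometric claim is then: at most two of the $C_i$ have support not contained in $I_z$. Indeed, if $S_i$ extends strictly left of $\ell_z$, then $S_i$, being connected and meeting $I_z$, must contain $\ell_z$, so some $I_v$ with $v \in C_i$ contains $\ell_z$. By the disjointness of supports, at most one component's support contains $\ell_z$, and similarly at most one contains $r_z$, proving the bound. Every $C_i$ with $S_i \subseteq I_z$ is automatically non-exterior (any exterior component has a vertex whose interval is disjoint from $I_z$) and contributes all $|C_i|$ of its intervals to $\imp_\alpha(z)$.

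Let $T$ denote the family of such ``contained'' components, so that $|T| \geq n-2$ and $T$ consists entirely of non-exterior components. If $k \in \{0,1,2\}$ is the number of exterior components, then $T$ omits at most $(n-k)-(n-2) = 2-k$ of the $n-k$ non-exterior components, and these omitted orders sum to at most the sum of the $2-k$ largest non-exterior orders. Removing exactly that sum from the total non-exterior order gives the sum of the $n-2$ smallest non-exterior orders, which is $\wt(z)$; hence $\imp_\alpha(z) \geq \sum_{C \in T} |C| \geq \wt(z)$. The main subtlety is the geometric claim: disjointness of supports alone does not suffice, because in a disconnected graph one could have many components sitting wholly outside $I_z$; the connectedness of $G$ is essential, since it forces every $S_i$ to meet $I_z$ and therefore to cross a boundary of $I_z$ whenever it protrudes.
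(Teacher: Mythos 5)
Your proof is correct and follows essentially the same route as the paper's: both identify the local components whose supports are forced inside $I_z$ (all but the two extreme ones), note these are non-exterior and contribute their full orders to $\imp_\alpha(z)$, and compare against the $n-2$ smallest non-exterior orders. You simply spell out more carefully the facts the paper asserts tersely (connectedness and disjointness of the supports, and why only the components containing $\ell_z$ or $r_z$ can protrude).
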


 \begin{proof}
 Consider any interval representation $\a: v \to I_v$ of $G$.
 The supports of the local components are themselves disjoint intervals
 which lie left to right along the line.  Say the local
 components in this ordering are $A_1, A_2, A_3, . . . , A_n$.
 Then the components $A_2, A_3, . . . , A_{n-1}$ must have supports entirely inside $I_z$.
 Thus each of these local components lies in the neighborhood of $z$.
 Hence if there are exterior components they must be $A_1$ or $A_n$, or both.
 In any case, the $n-2$ components $A_2, A_3, . . . , A_{n-1}$ are
 not exterior and thus the sum of their orders is at least $\wt(z)$.
 Thus we have shown that in any representation, $I_z$ contains at
 least $\wt(z)$ other intervals.  Thus the impropriety of $G$ is at
 least $\wt(z)$, as desired.
 \end{proof}

 \begin{cor}
 \label{wtcor}
 For any interval graph $G$, $\imp(G) \ge \wt(G)$.
 \end{cor}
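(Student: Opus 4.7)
The plan is to derive the corollary directly from Theorem \ref{wt} by maximizing over all vertices. Recall that the weight $\wt(G)$ of the graph is defined as the maximum of $\wt(z)$ taken over every vertex $z \in V$, whereas Theorem \ref{wt} provides the pointwise inequality $\imp(G) \ge \wt(z)$ for each individual $z$.

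First I would choose a vertex $z^{*} \in V$ attaining the maximum, so that $\wt(z^{*}) = \wt(G)$; such a vertex exists because $V$ is finite. Then I would invoke Theorem \ref{wt} applied to $z^{*}$ to conclude $\imp(G) \ge \wt(z^{*}) = \wt(G)$, which is exactly the desired inequality. Alternatively, one may simply observe that since $\imp(G) \ge \wt(z)$ holds for every $z$, taking the supremum of the right-hand side over all vertices yields $\imp(G) \ge \max_{z \in V} \wt(z) = \wt(G)$.

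There is no real obstacle here: the corollary is essentially a bookkeeping step that lifts the per-vertex bound in Theorem \ref{wt} to a graph-wide bound via the definition of $\wt(G)$. The only thing to be careful about is to make sure we are not conflating the two different roles of the word ``impropriety'' (the value $\imp_{\a}(z)$ for a vertex in a representation, the value $\imp(\a)$ for a representation, and the value $\imp(G)$ for the graph), but Theorem \ref{wt} already gives the inequality at the level of $\imp(G)$, so the corollary follows immediately.
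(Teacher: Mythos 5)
Your proof is correct and matches the paper's (implicit) argument exactly: the corollary is an immediate consequence of Theorem \ref{wt} combined with the definition of $\wt(G)$ as the maximum of $\wt(z)$ over all vertices, which is why the paper states it without a separate proof. Nothing is missing.
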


 $G$ is {\em balanced} iff $\wt(G) = \imp(G)$.  If $G$ is balanced, a vertex $z$ such
 that $\wt(z) = \imp(G)$ is a {\em basepoint} of $G$.
 Equivalently, $z$ is a basepoint iff $G$ is balanced and $z$ has maximum weight.
 Notice that a basepoint must have at least three local components
 since a vertex with only one or two local components has weight 0.

    \begin{figure}
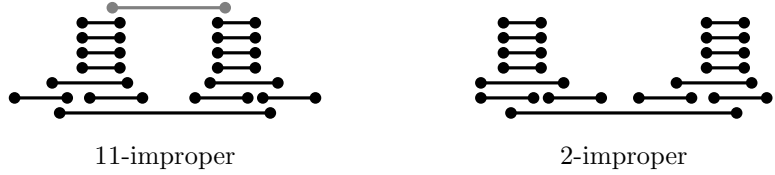

          \begin{center}
    \begin{tabular}{cc}
    \pspicture(7.1,2)
    \psline[linewidth=\ThicknessTwo, linecolor=\RootColor]{\LINETYPE}(1.1,0)(3.9,0)
    \rput(2.5,-.6){11-improper}
    \psline[linewidth=\ThicknessTwo, linecolor=\RootColor]{\LINETYPE}(.5,.2)(1.2,.2)
    \psline[linewidth=\ThicknessTwo, linecolor=\RootColor]{\LINETYPE}(1.5, .2)(2.2,.2)
    \psline[linewidth=\ThicknessTwo, linecolor=\RootColor]{\LINETYPE}(1,.4)(2,.4)
    \psline[linewidth=\ThicknessTwo, linecolor=\RootColor]{\LINETYPE}(1.4,.6)(1.9,.6)
    \psline[linewidth=\ThicknessTwo, linecolor=\RootColor]{\LINETYPE}(1.4,.8)(1.9,.8)
    \psline[linewidth=\ThicknessTwo, linecolor=\RootColor]{\LINETYPE}(1.4,1)(1.9,1)
    \psline[linewidth=\ThicknessTwo, linecolor=\RootColor]{\LINETYPE}(1.4,1.2)(1.9,1.2)

    \psline[linewidth=\ThicknessTwo, linecolor=\RootColor]{\LINETYPE}(3.8,.2)(4.5,.2)
    \psline[linewidth=\ThicknessTwo, linecolor=\RootColor]{\LINETYPE}(2.9, .2)(3.6,.2)
    \psline[linewidth=\ThicknessTwo, linecolor=\RootColor]{\LINETYPE}(3.1,.4)(4,.4)
    \psline[linewidth=\ThicknessTwo, linecolor=\RootColor]{\LINETYPE}(3.2,.6)(3.7,.6)
    \psline[linewidth=\ThicknessTwo, linecolor=\RootColor]{\LINETYPE}(3.2,.8)(3.7,.8)
    \psline[linewidth=\ThicknessTwo, linecolor=\RootColor]{\LINETYPE}(3.2,1)(3.7,1)
    \psline[linewidth=\ThicknessTwo, linecolor=\RootColor]{\LINETYPE}(3.2,1.2)(3.7,1.2)

    \psline[linewidth=\ThicknessTwo, linecolor=\SideColor]{\LINETYPE}(1.8,1.4)(3.3,1.4)

    \endpspicture

    \pspicture(4,2)
    \psline[linewidth=\ThicknessTwo, linecolor=\RootColor]{\LINETYPE}(0,0)(3,0)
    \rput(1.5,-.6){2-improper}

    \psline[linewidth=\ThicknessTwo, linecolor=\RootColor]{\LINETYPE}(1.7,.2)(2.4,.2)
    \psline[linewidth=\ThicknessTwo, linecolor=\RootColor]{\LINETYPE}(2.7, .2)(3.4,.2)
    \psline[linewidth=\ThicknessTwo, linecolor=\RootColor]{\LINETYPE}(2.2,.4)(3.2,.4)
    \psline[linewidth=\ThicknessTwo, linecolor=\RootColor]{\LINETYPE}(2.6,.6)(3.1,.6)
    \psline[linewidth=\ThicknessTwo, linecolor=\RootColor]{\LINETYPE}(2.6,.8)(3.1,.8)
    \psline[linewidth=\ThicknessTwo, linecolor=\RootColor]{\LINETYPE}(2.6,1)(3.1,1)
    \psline[linewidth=\ThicknessTwo, linecolor=\RootColor]{\LINETYPE}(2.6,1.2)(3.1,1.2)

    \psline[linewidth=\ThicknessTwo, linecolor=\RootColor]{\LINETYPE}(.5,.2)(1.2,.2)
    \psline[linewidth=\ThicknessTwo, linecolor=\RootColor]{\LINETYPE}(-.4, .2)(.3,.2)
    \psline[linewidth=\ThicknessTwo, linecolor=\RootColor]{\LINETYPE}(-.4,.4)(.7,.4)
    \psline[linewidth=\ThicknessTwo, linecolor=\RootColor]{\LINETYPE}(-.1,.6)(.4,.6)
    \psline[linewidth=\ThicknessTwo, linecolor=\RootColor]{\LINETYPE}(-.1,.8)(.4,.8)
    \psline[linewidth=\ThicknessTwo, linecolor=\RootColor]{\LINETYPE}(-.1,1)(.4,1)
    \psline[linewidth=\ThicknessTwo, linecolor=\RootColor]{\LINETYPE}(-.1,1.2)(.4,1.2)

    \endpspicture

    \\
    \pspicture(2,.4)
    \endpspicture
    & 
    \pspicture(2,.4)
    \endpspicture

    \end{tabular}
        \end{center}
 \caption{Example of Instability in the Impropriety}
 \label{unstab}
    \end{figure}


 \begin{thm}
 \label{wt0}
 If $G$ is a connected, interval graph, then the vertices of
 positive weight induce a disjoint union of paths.
 \end{thm}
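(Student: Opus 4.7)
The plan is to prove $G[W]$ has maximum degree at most $2$ and contains no cycle, where $W$ denotes the set of positive-weight vertices. Observe first that $\wt(z)>0$ iff $G\setminus\{z\}$ has at least three components: then $n-2\geq 1$ of the non-exterior components each contribute at least $1$ to the weight.

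The first tool is a \emph{containment lemma}: if in some representation $I_u\subseteq I_w$ with $u\neq w$, then $\wt(u)=0$. Indeed, a component $B$ of $G\setminus\{u\}$ not containing $w$ has no vertex adjacent to $w$, hence its intervals are disjoint from $I_w$ and from $I_u\subseteq I_w$; but by connectedness $B$ must attach to $u$ in $G$, contradicting non-adjacency to $u$. So $G\setminus\{u\}$ is connected and $\wt(u)=0$. As a corollary, whenever $u,z\in W$ are adjacent, neither of $I_u,I_z$ contains the other, and in any representation $I_u$ extends strictly to one side of $I_z$.

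The heart of the argument is the claim that in any representation, each $z\in W$ has at most one $W$-neighbor on each side of $I_z$. Suppose for contradiction that $u_1, u_2 \in W$ are both adjacent to $z$ and both extend to the left of $I_z$. The containment lemma applied to the pair $(u_1,u_2)$ rules out either interval containing the other, so after relabeling
\[
\ell_{u_1}<\ell_{u_2}<\ell_z<r_{u_1}<r_{u_2}<r_z.
\]
Now consider $G\setminus\{u_2\}$ and any component $B$ not containing $z$. No vertex of $B$ is adjacent to $z$ (else it would sit in $z$'s component), so every interval of $B$ is disjoint from $I_z$; since $B$ lies on the left side of $z$, those intervals lie in $(-\infty,\ell_z)$. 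But $B$ must attach to $u_2$ in $G$, so some $v\in B$ has $I_v$ meeting $I_{u_2}\cap(-\infty,\ell_z)=[\ell_{u_2},\ell_z)$. This region is contained in $I_{u_1}$ because $\ell_{u_1}<\ell_{u_2}$ and $\ell_z<r_{u_1}$, so $v$ is adjacent to $u_1$; yet $u_1$ lies in $z$'s component of $G\setminus\{u_2\}$, forcing $v$ into that same component, a contradiction. Hence $G\setminus\{u_2\}$ is connected, giving $\wt(u_2)=0$ and contradicting $u_2\in W$. So each $z\in W$ has at most two $W$-neighbors.

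Finally I rule out cycles in $G[W]$. Fix any representation and suppose $z_1 z_2\cdots z_k z_1$ is a cycle. At each $z_i$ the two $W$-neighbors $z_{i-1}, z_{i+1}$ lie on opposite sides of $I_{z_i}$. If $z_{i+1}$ is to the right of $z_i$ (so $\ell_{z_{i+1}}>\ell_{z_i}$), then $z_i$ is to the left of $z_{i+1}$, so $z_{i+2}$ must be to the right of $z_{i+1}$. By induction every step moves rightward, producing $\ell_{z_1}<\ell_{z_2}<\cdots<\ell_{z_k}<\ell_{z_1}$, which is absurd. Together with the degree bound, this shows $G[W]$ is a disjoint union of paths. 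The main obstacle is the core step of the third paragraph: one must accurately trap any component separated by removing $u_2$ inside the narrow strip $[\ell_{u_2},\ell_z)$ already covered by $I_{u_1}$; the containment lemma and the cycle argument are short once this staircase geometry is in hand.
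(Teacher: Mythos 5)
Your proof is correct, and its core coincides with the paper's: the containment lemma ($I_u\subseteq I_w$ forces $\wt(u)=0$) is the paper's first paragraph, and your ``two $W$-neighbors on the same side'' contradiction is the paper's observation that the sandwiched interval $I_{u_2}\subseteq I_{u_1}\cup I_z$ leaves $u_2$ with a single local component -- you just verify this directly by trapping a putative second component of $G\setminus\{u_2\}$ in the strip $[\ell_{u_2},\ell_z)$, which amounts to the same thing. The genuine difference is your final paragraph. The paper's proof stops after establishing that every positive-weight vertex has at most two positive-weight neighbors, which only shows that $G[W]$ is a disjoint union of paths \emph{and cycles}; it never excludes cycles. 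Your alternating-sides argument ($z_{i-1}$ and $z_{i+1}$ must exit $I_{z_i}$ on opposite sides, so the left endpoints increase strictly around the cycle) closes that gap cleanly and uniformly in the cycle length. (An alternative patch: chordality of $G$ reduces the cycle case to triangles, and a triangle in $W$ again produces a sandwiched interval of weight $0$.) The only cosmetic issue in your write-up is the insistence on strict inequalities such as $\ell_z<r_{u_1}$; equalities of endpoints are possible but harmless, since your containment lemma already rules out $\ell_{u_1}=\ell_{u_2}$ and the inclusion $[\ell_{u_2},\ell_z)\subseteq I_{u_1}$ needs only $\ell_z\le r_{u_1}$.
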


 \begin{proof}
 Suppose $\a$ is a representation of $G$ and suppose $I_v \subseteq I_w$.
 Then every neighbor of $v$ is also a neighbor of $w$. Hence in $G \setminus \{v\}$,
 all the neighbors of $v$ are still connected.  Hence $v$ has only
 one local component, so $\wt(v) = 0$.

 Now suppose some vertex $v$ with $\wt(v) > 0$ has three neighbors $a,b,c$ also with positive weight.
 Suppose $\a$ is a representation of $G$.  We saw above that none of the intervals $I_a$, $I_b$, or $I_c$
 can be contained in $I_v$.  Thus two of these intervals must exit $I_v$ on the same side.
 Say, $I_a$ and $I_b$ exits $I_v$ through the right end point $r_v$ of $I_v$.
 Without loss of generality, assume $\ell_a \le \ell_b$. Since no
 interval of positive weight can contain another, $r_a \le r_b$ is forced.
 Thus $I_a \subseteq I_v \cup I_b$.  But this means that any
 neighbor of $a$ must be a neighbor of either $v$ or $b$.  Since $v$ and $b$ are adjacent,
 it follows that $a$ has only one local component, and hence has $\wt(a) = 0$, a contradiction.

 \bigskip

 \end{proof}



   \begin{figure}
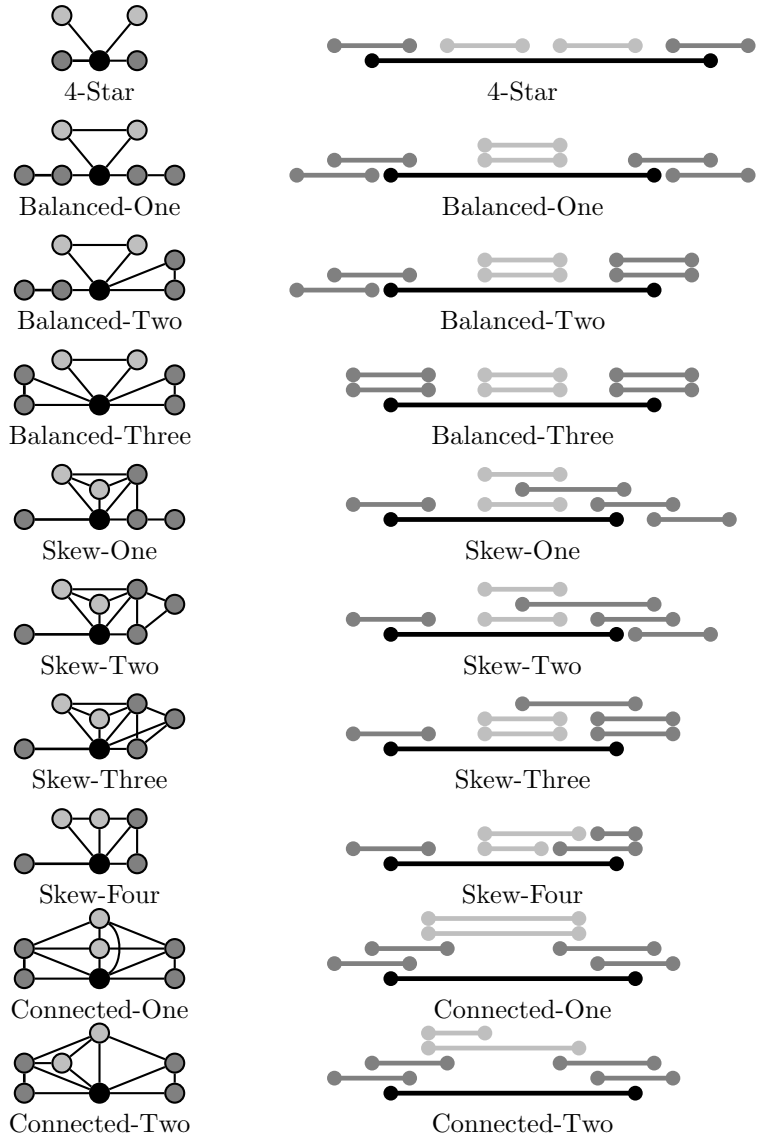

   \hspace{.1 in}
        \begin{tabular}{cc}

        \pspicture(\PicWidthL,\PicHeight)
        \rput(\Wmid,\LabelSpacing){4-Star}
            \cnode[fillstyle=solid, fillcolor=\SideColor](\Wbone,0){\R}{n1}
            \cnode[fillstyle=solid, fillcolor=black](\Wmid,0){\R}{n2}
            \cnode[fillstyle=solid, fillcolor=\SideColor](\Wpone,0){\R}{n3}
            \cnode[fillstyle=solid, fillcolor=light\SideColor](\Wbone,\Hthree){\R}{n5}
            \cnode[fillstyle=solid, fillcolor=light\SideColor](\Wpone,\Hthree){\R}{n6}

            \ncline[linewidth=1pt]{n1}{n2}
            \ncline[]{n2}{n3}
            \ncline[]{n2}{n5}
            \ncline[]{n2}{n6}
        \endpspicture

&

        \pspicture(\PicWidth,\PicHeight)
        \rput(3,\LabelSpacing){4-Star}
            \psline[linewidth=\Thickness, linecolor=\RootColor]{\LINETYPE}(1,0)(5.5,0)
            \psline[linewidth=\Thickness, linecolor=\SideColor]{\LINETYPE}(0.5,\Hone)(1.5,\Hone)
            \psline[linewidth=\Thickness, linecolor=light\SideColor]{\LINETYPE}(2,\Hone)(3,\Hone)
            \psline[linewidth=\Thickness, linecolor=light\SideColor]{\LINETYPE}(3.5,\Hone)(4.5,\Hone)
            \psline[linewidth=\Thickness, linecolor=\SideColor]{\LINETYPE}(5,\Hone)(6,\Hone)
        \endpspicture

\\

\pspicture(\PicWidthL,\PicHeight)
\rput(\Wmid,\LabelSpacing){Balanced-One} \cnode[fillstyle=solid,
fillcolor=\SideColor](\Wbtwo,0){\R}{n0} \cnode[fillstyle=solid,
fillcolor=\SideColor](\Wbone,0){\R}{n1} \cnode[fillstyle=solid,
fillcolor=\RootColor](\Wmid,0){\R}{n2} \cnode[fillstyle=solid,
fillcolor=\SideColor](\Wpone,0){\R}{n3} \cnode[fillstyle=solid,
fillcolor=\SideColor](\Wptwo,0){\R}{n4} \cnode[fillstyle=solid,
fillcolor=light\SideColor](\Wbone,\Hthree){\R}{n5}
\cnode[fillstyle=solid,
fillcolor=light\SideColor](\Wpone,\Hthree){\R}{n6}

\ncline[linewidth=1pt]{n0}{n1} \ncline[]{n1}{n2} \ncline[]{n2}{n3}
\ncline[]{n3}{n4} \ncline[]{n2}{n5} \ncline[]{n2}{n6}
\ncline[]{n5}{n6}
\endpspicture

&

    \pspicture(\PicWidth,\PicHeight)
    \rput(3,\LabelSpacing){Balanced-One}
    \psline[linewidth=\Thickness, linecolor=\RootColor]{\LINETYPE}(1.25,\Hzero)(4.75,\Hzero)
    \psline[linewidth=\Thickness, linecolor=\SideColor]{\LINETYPE}(0,\Hzero)(1,\Hzero)
    \psline[linewidth=\Thickness, linecolor=\SideColor]{\LINETYPE}(.5,\Hone)(1.5,\Hone)
    \psline[linewidth=\Thickness, linecolor=light\SideColor]{\LINETYPE}(2.5,\Hone)(3.5,\Hone)
    \psline[linewidth=\Thickness, linecolor=light\SideColor]{\LINETYPE}(2.5,\Htwo)(3.5,\Htwo)
    \psline[linewidth=\Thickness, linecolor=\SideColor]{\LINETYPE}(4.5,\Hone)(5.5,\Hone)
    \psline[linewidth=\Thickness, linecolor=\SideColor]{\LINETYPE}(5,\Hzero)(6,\Hzero)

    \endpspicture

\\

\pspicture(\PicWidthL,\PicHeight)
\rput(\Wmid,\LabelSpacing){Balanced-Two} \cnode[fillstyle=solid,
fillcolor=\SideColor](\Wbtwo,0){\R}{leftendcap1}
\cnode[fillstyle=solid,
fillcolor=\SideColor](\Wbone,0){\R}{leftendcap2}
\cnode[fillstyle=solid, fillcolor=\RootColor](\Wmid,0){\R}{middle}
\cnode[fillstyle=solid,
fillcolor=\SideColor](\Wptwo,0){\R}{rightk2a}
\cnode[fillstyle=solid,
fillcolor=\SideColor](\Wptwo,\Htwo){\R}{rightk2b}
\cnode[fillstyle=solid,
fillcolor=light\SideColor](\Wpone,\Hthree){\R}{clustera}
\cnode[fillstyle=solid,
fillcolor=light\SideColor](\Wbone,\Hthree){\R}{clusterb}

\ncline[linewidth=1pt]{leftendcap1}{leftendcap2}
\ncline[]{leftendcap2}{middle} \ncline[]{middle}{rightk2a}
\ncline[]{middle}{rightk2b} \ncline[]{middle}{clustera}
\ncline[]{middle}{clusterb} \ncline[]{clustera}{clusterb}
\ncline[]{rightk2a}{rightk2b}
\endpspicture

&

    \pspicture(\PicWidth,\PicHeight)
    \rput(3,\LabelSpacing){Balanced-Two}
    \psline[linewidth=\Thickness, linecolor=\RootColor]{\LINETYPE}(1.25,\Hzero)(4.75,\Hzero)
    \psline[linewidth=\Thickness, linecolor=\SideColor]{\LINETYPE}(0,\Hzero)(1,\Hzero)
    \psline[linewidth=\Thickness, linecolor=\SideColor]{\LINETYPE}(.5,\Hone)(1.5,\Hone)
    \psline[linewidth=\Thickness, linecolor=light\SideColor]{\LINETYPE}(2.5,\Hone)(3.5,\Hone)
    \psline[linewidth=\Thickness, linecolor=light\SideColor]{\LINETYPE}(2.5,\Htwo)(3.5,\Htwo)
    \psline[linewidth=\Thickness, linecolor=\SideColor]{\LINETYPE}(4.25,\Hone)(5.25,\Hone)
    \psline[linewidth=\Thickness, linecolor=\SideColor]{\LINETYPE}(4.25,\Htwo)(5.25,\Htwo)

    \endpspicture

\\

    \pspicture(\PicWidthL,\PicHeight)
    \rput(\Wmid,\LabelSpacing){Balanced-Three}
        \cnode[fillstyle=solid, fillcolor=\SideColor](\Wbtwo,0){\R}{leftk2a}
        \cnode[fillstyle=solid, fillcolor=\SideColor](\Wbtwo,\Htwo){\R}{leftk2b}
        \cnode[fillstyle=solid, fillcolor=\RootColor](\Wmid,0){\R}{middle}
        \cnode[fillstyle=solid, fillcolor=\SideColor](\Wptwo,0){\R}{rightk2a}
        \cnode[fillstyle=solid, fillcolor=\SideColor](\Wptwo,\Htwo){\R}{rightk2b}
        \cnode[fillstyle=solid, fillcolor=light\SideColor](\Wbone,\Hthree){\R}{clustera}
        \cnode[fillstyle=solid, fillcolor=light\SideColor](\Wpone,\Hthree){\R}{clusterb}

        \ncline[linewidth=1pt]{leftk2a}{leftk2b}
        \ncline[]{leftk2a}{middle}
        \ncline[]{leftk2b}{middle}
        \ncline[]{middle}{rightk2a}
        \ncline[]{middle}{rightk2b}
        \ncline[]{middle}{clustera}
        \ncline[]{middle}{clusterb}
        \ncline[]{clustera}{clusterb}
        \ncline[]{leftk2a}{leftk2b}
        \ncline[]{rightk2a}{rightk2b}
    \endpspicture

&

    \pspicture(\PicWidth,\PicHeight)
    \rput(3,\LabelSpacing){Balanced-Three}
        \psline[linewidth=\Thickness, linecolor=\RootColor]{\LINETYPE}(1.25,\Hzero)(4.75,\Hzero)
        \psline[linewidth=\Thickness, linecolor=\SideColor]{\LINETYPE}(.75,\Htwo)(1.75,\Htwo)
        \psline[linewidth=\Thickness, linecolor=\SideColor]{\LINETYPE}(.75,\Hone)(1.75,\Hone)
        \psline[linewidth=\Thickness, linecolor=\ClusterColor]{\LINETYPE}(2.5,\Hone)(3.5,\Hone)
        \psline[linewidth=\Thickness, linecolor=\ClusterColor]{\LINETYPE}(2.5,\Htwo)(3.5,\Htwo)
        \psline[linewidth=\Thickness, linecolor=\SideColor]{\LINETYPE}(4.25,\Hone)(5.25,\Hone)
        \psline[linewidth=\Thickness, linecolor=\SideColor]{\LINETYPE}(4.25,\Htwo)(5.25,\Htwo)
    \endpspicture

\\

    \pspicture(\PicWidthL,\PicHeight)
    \rput(\Wmid,\LabelSpacing){Skew-One}
        \cnode[fillstyle=solid, fillcolor=\SideColor](\Wbtwo,0){\R}{leftk1}
        \cnode[fillstyle=solid, fillcolor=\RootColor](\Wmid,0){\R}{root}
        \cnode[fillstyle=solid, fillcolor=\SideColor](\Wpone,0){\R}{rightcap1}
        \cnode[fillstyle=solid, fillcolor=\SideColor](\Wptwo,0){\R}{rightcap2}
        \cnode[fillstyle=solid, fillcolor=\SideColor](\Wpone,\Hthree){\R}{rightother}
        \cnode[fillstyle=solid, fillcolor=\ClusterColor](\Wbone,\Hthree){\R}{clustera}
        \cnode[fillstyle=solid, fillcolor=\ClusterColor](\Wmid,\Htwo){\R}{clusterb}

        \ncline[linewidth=1pt]{leftk1}{root}
        \ncline[]{root}{rightcap1}
        \ncline[]{root}{rightother}
        \ncline[]{root}{clustera}
        \ncline[]{root}{clusterb}
        \ncline[]{clustera}{clusterb}
        \ncline[]{rightcap1}{rightcap2}
        \ncline[]{rightcap1}{rightother}
        \ncline[]{rightother}{clustera}
        \ncline[]{rightother}{clusterb}
    \endpspicture

&

    \pspicture(\PicWidth,\PicHeight)
    \rput(3,\LabelSpacing){Skew-One}
        \psline[linewidth=\Thickness, linecolor=\RootColor]{\LINETYPE}(1.25,\Hzero)(4.25,\Hzero)
        \psline[linewidth=\Thickness, linecolor=\SideColor]{\LINETYPE}(.75,\Hone)(1.75,\Hone)
        \psline[linewidth=\Thickness, linecolor=\ClusterColor]{\LINETYPE}(2.5,\Hthree)(3.5,\Hthree)
        \psline[linewidth=\Thickness, linecolor=\ClusterColor]{\LINETYPE}(2.5,\Hone)(3.5,\Hone)
        \psline[linewidth=\Thickness, linecolor=\SideColor]{\LINETYPE}(3,\Htwo)(4.35,\Htwo)
        \psline[linewidth=\Thickness, linecolor=\SideColor]{\LINETYPE}(4.75,\Hzero)(5.75,\Hzero)
        \psline[linewidth=\Thickness, linecolor=\SideColor]{\LINETYPE}(4,\Hone)(5,\Hone)
    \endpspicture

        \\

    \pspicture(\PicWidthL,\PicHeight)
    \rput(\Wmid,\LabelSpacing){Skew-Two}
        \cnode[fillstyle=solid, fillcolor=\SideColor](\Wbtwo,0){\R}{leftk1}
        \cnode[fillstyle=solid, fillcolor=\RootColor](\Wmid,0){\R}{root}
        \cnode[fillstyle=solid, fillcolor=\SideColor](\Wpone,0){\R}{rightk3a}
        \cnode[fillstyle=solid, fillcolor=\SideColor](\Wptwo,\Htwo){\R}{rightk3b}
        \cnode[fillstyle=solid, fillcolor=\SideColor](\Wpone,\Hthree){\R}{rightk3c}
        \cnode[fillstyle=solid, fillcolor=\ClusterColor](\Wbone,\Hthree){\R}{clustera}
        \cnode[fillstyle=solid, fillcolor=\ClusterColor](\Wmid,\Htwo){\R}{clusterb}

        \ncline[linewidth=1pt]{leftk1}{root}
        \ncline[]{root}{rightk3a}
        \ncline[]{root}{rightk3c}
        \ncline[]{root}{clustera}
        \ncline[]{root}{clusterb}
        \ncline[]{clustera}{clusterb}
        \ncline[]{rightk3a}{rightk3b}
        \ncline[]{rightk3b}{rightk3c}
        \ncline[]{rightk3a}{rightk3c}
        \ncline[]{clustera}{rightk3c}
        \ncline[]{clusterb}{rightk3c}
    \endpspicture

&

    \pspicture(\PicWidth,\PicHeight)
    \rput(3,\LabelSpacing){Skew-Two}
        \psline[linewidth=\Thickness, linecolor=\RootColor]{\LINETYPE}(1.25,\Hzero)(4.25,\Hzero)
        \psline[linewidth=\Thickness, linecolor=\SideColor]{\LINETYPE}(.75,\Hone)(1.75,\Hone)
        \psline[linewidth=\Thickness, linecolor=\ClusterColor]{\LINETYPE}(2.5,\Hthree)(3.5,\Hthree)
        \psline[linewidth=\Thickness, linecolor=\ClusterColor]{\LINETYPE}(2.5,\Hone)(3.5,\Hone)
        \psline[linewidth=\Thickness, linecolor=\SideColor]{\LINETYPE}(3,\Htwo)(4.75,\Htwo)
        \psline[linewidth=\Thickness, linecolor=\SideColor]{\LINETYPE}(4.5,\Hzero)(5.5,\Hzero)
        \psline[linewidth=\Thickness, linecolor=\SideColor]{\LINETYPE}(4,\Hone)(5,\Hone)
    \endpspicture

    \\

        \pspicture(\PicWidthL,\PicHeight)
    \rput(\Wmid,\LabelSpacing){Skew-Three}
        \cnode[fillstyle=solid, fillcolor=\SideColor](\Wbtwo,0){\R}{leftk1}
        \cnode[fillstyle=solid, fillcolor=\RootColor](\Wmid,0){\R}{root}
        \cnode[fillstyle=solid, fillcolor=\SideColor](\Wpone,0){\R}{rightk3a}
        \cnode[fillstyle=solid, fillcolor=\SideColor](\Wptwo,\Htwo){\R}{rightk3b}
        \cnode[fillstyle=solid, fillcolor=\SideColor](\Wpone,\Hthree){\R}{rightk3c}
        \cnode[fillstyle=solid, fillcolor=\ClusterColor](\Wbone,\Hthree){\R}{clustera}
        \cnode[fillstyle=solid, fillcolor=\ClusterColor](\Wmid,\Htwo){\R}{clusterb}

        \ncline[linewidth=1pt]{leftk1}{root}
        \ncline[]{root}{rightk3a}
        \ncline[]{root}{rightk3b}
        \ncline[]{root}{rightk3c}
        \ncline[]{root}{clustera}
        \ncline[]{root}{clusterb}
        \ncline[]{clustera}{clusterb}
        \ncline[]{rightk3a}{rightk3b}
        \ncline[]{rightk3b}{rightk3c}
        \ncline[]{rightk3a}{rightk3c}
        \ncline[]{clustera}{rightk3c}
        \ncline[]{clusterb}{rightk3c}
    \endpspicture

&

    \pspicture(\PicWidth,\PicHeight)
    \rput(3,\LabelSpacing){Skew-Three}
        \psline[linewidth=\Thickness, linecolor=\RootColor]{\LINETYPE}(1.25,\Hzero)(4.25,\Hzero)
        \psline[linewidth=\Thickness, linecolor=\SideColor]{\LINETYPE}(.75,\Hone)(1.75,\Hone)
        \psline[linewidth=\Thickness, linecolor=\ClusterColor]{\LINETYPE}(2.5,\Hone)(3.5,\Hone)
        \psline[linewidth=\Thickness, linecolor=\ClusterColor]{\LINETYPE}(2.5,\Htwo)(3.5,\Htwo)
        \psline[linewidth=\Thickness, linecolor=\SideColor]{\LINETYPE}(3,\Hthree)(4.5,\Hthree)
        \psline[linewidth=\Thickness, linecolor=\SideColor]{\LINETYPE}(4,\Hone)(5,\Hone)
        \psline[linewidth=\Thickness, linecolor=\SideColor]{\LINETYPE}(4,\Htwo)(5,\Htwo)
    \endpspicture

    \\

        \pspicture(\PicWidthL,\PicHeight)
    \rput(\Wmid,\LabelSpacing){Skew-Four}
        \cnode[fillstyle=solid, fillcolor=\SideColor](\Wbtwo,0){\R}{leftk1}
        \cnode[fillstyle=solid, fillcolor=\RootColor](\Wmid,0){\R}{root}
        \cnode[fillstyle=solid, fillcolor=\SideColor](\Wpone,0){\R}{rightk2a}
        \cnode[fillstyle=solid, fillcolor=\SideColor](\Wpone,\Hthree){\R}{rightk2b}
        \cnode[fillstyle=solid, fillcolor=\ClusterColor](\Wbone,\Hthree){\R}{clustera}
        \cnode[fillstyle=solid, fillcolor=\ClusterColor](\Wmid,\Hthree){\R}{clusterb}

        \ncline[linewidth=1pt]{leftk1}{root}
        \ncline[]{root}{rightk2a}
        \ncline[]{root}{rightk2b}
        \ncline[]{root}{clustera}
        \ncline[]{root}{clusterb}
        \ncline[]{clustera}{clusterb}
        \ncline[]{rightk2a}{rightk2b}
        \ncline[]{clusterb}{rightk2b}
    \endpspicture

&

    \pspicture(\PicWidth,\PicHeight)
    \rput(3,\LabelSpacing){Skew-Four}
        \psline[linewidth=\Thickness, linecolor=\RootColor]{\LINETYPE}(1.25,\Hzero)(4.25,\Hzero)
        \psline[linewidth=\Thickness, linecolor=\SideColor]{\LINETYPE}(.75,\Hone)(1.75,\Hone)
        \psline[linewidth=\Thickness, linecolor=\ClusterColor]{\LINETYPE}(2.5,\Hone)(3.25,\Hone)
        \psline[linewidth=\Thickness, linecolor=\ClusterColor]{\LINETYPE}(2.5,\Htwo)(3.75,\Htwo)
        \psline[linewidth=\Thickness, linecolor=\SideColor]{\LINETYPE}(3.5,\Hone)(4.5,\Hone)
        \psline[linewidth=\Thickness, linecolor=\SideColor]{\LINETYPE}(4,\Htwo)(4.5,\Htwo)
    \endpspicture

\\

        \pspicture(\PicWidthL,\PicHeight)
    \rput(\Wmid,\LabelSpacing){Connected-One}
        \cnode[fillstyle=solid, fillcolor=\SideColor](\Wbtwo,0){\R}{leftk2a}
        \cnode[fillstyle=solid, fillcolor=\SideColor](\Wbtwo,\Htwo){\R}{leftk2b}
        \cnode[fillstyle=solid, fillcolor=\RootColor](\Wmid,0){\R}{middle}
        \cnode[fillstyle=solid, fillcolor=\SideColor](\Wptwo,0){\R}{rightk2a}
        \cnode[fillstyle=solid, fillcolor=\SideColor](\Wptwo,\Htwo){\R}{rightk2b}
        \cnode[fillstyle=solid, fillcolor=light\SideColor](\Wmid,\Hfour){\R}{cluster1}
        \cnode[fillstyle=solid, fillcolor=light\SideColor](\Wmid,\Htwo){\R}{cluster2}

        \ncline[linewidth=1pt]{leftk2a}{leftk2b}
        \ncline[]{rightk2a}{rightk2b}
        \ncline[]{middle}{leftk2b}
        \ncline[]{middle}{leftk2a}
        \ncline[]{middle}{rightk2a}
        \ncline[]{middle}{rightk2b}
        \ncline[]{middle}{cluster2}
        \ncline[]{leftk2b}{cluster2}
        \ncline[]{rightk2b}{cluster2}
        \ncline[]{cluster1}{cluster2}
        \ncline[]{cluster1}{leftk2b}
        \ncline[]{cluster1}{rightk2b}
        \ncarc[arcangleA=60, arcangleB=60]{cluster1}{middle}
    \endpspicture

&

    \pspicture(\PicWidth,\PicHeight)
    \rput(3,\LabelSpacing){Connected-One}
        \psline[linewidth=\Thickness, linecolor=\RootColor]{\LINETYPE}(1.25,0)(4.5,0)
        \psline[linewidth=\Thickness, linecolor=\SideColor]{\LINETYPE}(0.5,\Hone)(1.5,\Hone)
        \psline[linewidth=\Thickness, linecolor=\SideColor]{\LINETYPE}(1,\Htwo)(2,\Htwo)
        \psline[linewidth=\Thickness, linecolor=light\SideColor]{\LINETYPE}(1.75,\Hthree)(3.75,\Hthree)
        \psline[linewidth=\Thickness, linecolor=light\SideColor]{\LINETYPE}(1.75,\Hfour)(3.75,\Hfour)
        \psline[linewidth=\Thickness, linecolor=\SideColor]{\LINETYPE}(3.5,\Htwo)(4.75,\Htwo)
        \psline[linewidth=\Thickness, linecolor=\SideColor]{\LINETYPE}(4,\Hone)(5,\Hone)

    \endpspicture

\\

    \pspicture(\PicWidthL,\PicHeight)
    \rput(\Wmid,\LabelSpacing){Connected-Two}
        \cnode[fillstyle=solid, fillcolor=\SideColor](\Wbtwo,0){\R}{leftk2a}
        \cnode[fillstyle=solid, fillcolor=\SideColor](\Wbtwo,\Htwo){\R}{leftk2b}
        \cnode[fillstyle=solid, fillcolor=\RootColor](\Wmid,0){\R}{middle}
        \cnode[fillstyle=solid, fillcolor=\SideColor](\Wptwo,0){\R}{rightk2a}
        \cnode[fillstyle=solid, fillcolor=\SideColor](\Wptwo,\Htwo){\R}{rightk2b}
        \cnode[fillstyle=solid, fillcolor=light\SideColor](\Wbone,\Htwo){\R}{cluster1}
        \cnode[fillstyle=solid, fillcolor=light\SideColor](\Wmid,\Hfour){\R}{cluster2}

        \ncline[linewidth=1pt]{leftk2a}{leftk2b}
        \ncline[]{rightk2a}{rightk2b}
        \ncline[]{middle}{leftk2b}
        \ncline[]{middle}{leftk2a}
        \ncline[]{middle}{rightk2a}
        \ncline[]{middle}{rightk2b}
        \ncline[]{middle}{cluster1}
        \ncline[]{middle}{cluster2}
        \ncline[]{leftk2b}{cluster2}
        \ncline[]{rightk2b}{cluster2}
        \ncline[]{cluster1}{cluster2}
        \ncline[]{cluster1}{leftk2b}
    \endpspicture

&

    \pspicture(\PicWidth,\PicHeight)
    \rput(3,\LabelSpacing){Connected-Two}
        \psline[linewidth=\Thickness, linecolor=\RootColor]{\LINETYPE}(1.25,0)(4.5,0)
        \psline[linewidth=\Thickness, linecolor=\SideColor]{\LINETYPE}(0.5,\Hone)(1.5,\Hone)
        \psline[linewidth=\Thickness, linecolor=\SideColor]{\LINETYPE}(1,\Htwo)(2,\Htwo)
        \psline[linewidth=\Thickness, linecolor=light\SideColor]{\LINETYPE}(1.75,\Hthree)(3.75,\Hthree)
        \psline[linewidth=\Thickness, linecolor=light\SideColor]{\LINETYPE}(1.75,\Hfour)(2.5,\Hfour)
        \psline[linewidth=\Thickness, linecolor=\SideColor]{\LINETYPE}(3.5,\Htwo)(4.75,\Htwo)
        \psline[linewidth=\Thickness, linecolor=\SideColor]{\LINETYPE}(4,\Hone)(5,\Hone)
    \endpspicture

\\
        \pspicture(\PicWidthL,.4)
        \endpspicture
    &
        \pspicture(\PicWidth,.4)
        \endpspicture

\end{tabular}

    \caption{Minimal Forbidden Subgraphs for $\sI_1$}
    \label{fig:main}

 \end{figure}

 \section{$p$-critical Interval Graphs}

 An interval graph $G$ is {\em $p$-critical} with respect to
 impropriety iff $G$ has impropriety $p$ but every proper induced
 subgraph of $G$ has impropriety strictly less than $p$.  Note that
 the concept of $p$-critical only makes sense for $p > 0$.
 Clearly, a $p+1$-critical graph is a MFISG for the class $\sI_p$ of
 $p$-improper interval graphs.  The converse is not so clear.
 Fig. \ref{unstab} gives an example where the impropriety changes
 drastically with the removal of a single vertex.

 \begin{thm}
 \label{2pts}
 Let $z$ be a vertex of maximum weight in a balanced $p$-critical graph $G$.
 If $C$ is an exterior local component at $z$, then $C$ consists of exactly two vertices.
 \end{thm}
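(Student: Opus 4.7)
The plan is to argue by contradiction: suppose $|C| \ge 3$ and exhibit a vertex $v \in C$ for which the weight of $z$ in $G \setminus v$ is still $p$. Since $G$ is balanced with $z$ of maximum weight, $\wt(z) = \wt(G) = \imp(G) = p$, so Corollary~\ref{wtcor} applied to $G \setminus v$ will force $\imp(G \setminus v) \ge p$, contradicting the $p$-criticality of $G$. Before starting, note that $|C| \ge 2$: connectivity of $G$ places a neighbor of $z$ in $C$, and $C$ being exterior places a non-neighbor of $z$ in $C$.

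The key reduction is to choose $v \in C$ so that $C \setminus \{v\}$ is still a single connected local component of $G \setminus v$ at $z$ and still exterior. When this holds, the collection of non-exterior local components of $z$ and their orders are unchanged, and the total number of components $n$ is unchanged, so the defining sum gives $\wt_{G \setminus v}(z) = \wt_G(z) = p$. To preserve connectivity of $C \setminus \{v\}$ I would take $v$ simplicial in the induced subgraph on $C$; since $C$ is a connected interval (hence chordal) graph, deleting a simplicial vertex keeps it connected, and by Dirac's theorem $C$ has at least two simplicial vertices. To preserve exteriority I split on the number of non-neighbors of $z$ in $C$. If there are two or more, any simplicial $v \in C$ works, since a non-neighbor remains in $C \setminus \{v\}$. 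If $z$ has a unique non-neighbor $x$ in $C$, I would pick a simplicial $v \ne x$, which is available because $C$ has at least two simplicial vertices; then $v$ is a neighbor of $z$ and $x \in C \setminus \{v\}$ keeps $C \setminus \{v\}$ exterior.

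With $v$ so chosen, $z$'s local components in $G \setminus v$ are those of $G$ with $C$ replaced by the still-connected, still-exterior $C \setminus \{v\}$, so every non-exterior component and its order is unchanged. Thus $\wt_{G \setminus v}(z) = p$, and Corollary~\ref{wtcor} yields $\imp(G \setminus v) \ge p$, contradicting $p$-criticality; hence $|C| \le 2$, and combined with $|C| \ge 2$ this gives $|C| = 2$. The main obstacle is the subcase in which $C$ has only a single non-neighbor of $z$, where we cannot freely delete any simplicial vertex and must invoke the fact that a chordal graph has at least two simplicial vertices in order to produce a simplicial neighbor of $z$ inside $C$ that is safe to remove.
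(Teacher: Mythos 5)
Your proof is correct and follows essentially the same strategy as the paper: pass to a proper induced subgraph in which $C$ is replaced by a smaller, still connected, still exterior local component, so that $\wt(z)$ is preserved while $p$-criticality forces the impropriety to drop below $\wt(z)$, contradicting Corollary~\ref{wtcor}. The paper achieves this in a single step --- it keeps only a vertex $v$ at distance two from $z$ together with a common neighbor $w$ of $v$ and $z$, so that the replacement component $\{v,w\}$ is automatically connected and exterior --- which avoids your simplicial-vertex and Dirac's-theorem machinery, but the underlying argument is the same.
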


 \begin{proof}
 Let $v$ be a vertex in $C$ at distance 2 from $z$, and let $w$ be a common neighbor of $v$ and $z$.
 Let $H$ be the graph obtained from $G$ by deleting all vertices of $C$ other than $v$ and $w$.
 The local components at $z$ in $H$ are the same as in $G$ except
 that $C$ is replaced by $\{v,w\}$.  Hence the $n-2$ smallest non-exterior local components
 at $z$ in $H$ are the same as in $G$.  Thus the weight of $z$ in
 $H$ is the same as the weight of $z$ in $G$.  Since $G$ is
 balanced and $C$ contains vertices other than $v$ and $w$, then $H$ is a proper induced subgraph
 of $G$ and hence has a strictly smaller impropriety.  Thus we have
    \[
        \wt_H(z) \; \le \; \imp(H) \; < \; \imp(G) \; = \;  \wt_G(z) \; = \; \wt_H(z),
    \]
 a contradiction.  Hence $C$ must be just $\{v, w\}$ as desired.
 \end{proof}

 \begin{thm}
 \label{unibase}
 If $G$ is balanced and $p$-critical, then $G$ has exactly one basepoint.
 \end{thm}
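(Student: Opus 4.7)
The plan is to argue by contradiction. Suppose $G$ has two distinct basepoints $z_1$ and $z_2$; I will derive a contradiction with Theorem \ref{2pts} by exhibiting an exterior local component of $z_1$ that contains at least three vertices. Fix a minimal interval representation $\alpha$ of $G$, and after a small perturbation assume all interval endpoints are distinct. Since each $z_i$ has weight $p>0$, each has at least three local components.

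First I would rule out containment of one of $I_{z_1}, I_{z_2}$ in the other. If $I_{z_1}\subseteq I_{z_2}$, then, exactly as in the first paragraph of the proof of Theorem \ref{wt0}, every neighbor of $z_1$ is a neighbor of $z_2$, so $z_1$ has only one local component and $\wt(z_1)=0$, contradicting $\wt(z_1)=p>0$; the reverse containment is symmetric. In the remaining cases --- disjoint intervals or proper overlap --- I may assume by symmetry that $I_{z_2}$ extends past the right endpoint of $I_{z_1}$. Listing the local components of $z_1$ left to right along the real line as $A_1,\ldots,A_n$, the proof of Theorem \ref{wt} tells me that $A_2,\ldots,A_{n-1}$ have supports inside $I_{z_1}$, so only $A_n$'s support can extend to the right of $I_{z_1}$. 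Hence $z_2\in A_n$.

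Finally I would show $|A_n|\ge 3$. Let $D_1,\ldots,D_m$ be the local components of $z_2$, with $z_1\in D_1$; since $\wt(z_2)>0$, we have $m\ge 3$. For any $i\ne 1$ and $v\in D_i$, an edge $vz_1$ would put $v$ in the same component of $G\setminus\{z_2\}$ as $z_1$, i.e., in $D_1$, contradicting $v\in D_i$. Thus no vertex of $D_2\cup\cdots\cup D_m$ is adjacent to $z_1$, and each $D_i$ $(i\ne 1)$ remains joined to $z_2$ in $G\setminus\{z_1\}$, so $\{z_2\}\cup D_2\cup\cdots\cup D_m\subseteq A_n$. This shows $A_n$ is exterior at $z_1$ and $|A_n|\ge m\ge 3$, contradicting Theorem \ref{2pts} applied to the basepoint $z_1$. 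The main obstacle is correctly placing $z_2$ inside the rightmost local component of $z_1$; this rests on the geometric input from the proof of Theorem \ref{wt} (disjointness of the supports of the local components and the middle components sitting inside $I_{z_1}$), with the initial perturbation to distinct endpoints absorbing any degenerate configurations.
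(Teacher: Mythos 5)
Your proof is correct, and it rests on the same engine as the paper's: assume two basepoints, look at the local component of one basepoint that contains the other, and play it off against Theorem~\ref{2pts}. The endgame differs slightly. The paper shows that the bridging component $C$ at $z$ containing $y$ is non-exterior (it holds $y$ and at least two further neighbors of $y$, so it is too big to be exterior by Theorem~\ref{2pts}), dually that the component $D$ at $y$ containing $z$ is non-exterior, and then forces $D$ to be exterior anyway because $z$ has a whole local component disjoint from $C$. You instead show directly that the single component $A$ at $z_1$ containing $z_2$ is \emph{both} exterior (it absorbs all of $D_2,\dots,D_m$, whose vertices you correctly verify are non-neighbors of $z_1$) \emph{and} of order at least $3$, which contradicts Theorem~\ref{2pts} in one stroke; this is arguably cleaner. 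The one criticism is that your first two paragraphs --- the minimal representation, the endpoint perturbation, the containment trichotomy for $I_{z_1}$ and $I_{z_2}$, and the identification of the bridging component as the rightmost $A_n$ --- are dead weight: the component of $G\setminus\{z_1\}$ containing $z_2$ is defined purely combinatorially, and every property of it you actually use ($\{z_2\}\cup D_2\cup\cdots\cup D_m$ lies inside it, and it contains a non-neighbor of $z_1$) is established without reference to any representation. The paper's proof is representation-free for exactly this reason, and yours becomes so if you simply delete the geometric setup and let $A$ be ``the local component at $z_1$ containing $z_2$.''
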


 \begin{proof}
    Suppose $y$ and $z$ are distinct basepoints.  Because $G$ is
    connected, $y$ must belong to some local component $C$ of $z$. This
    component must also contain all $p$ of the vertices whose intervals are contained in $I_y$.
    Since $G$ is balanced and $p \ge 1$, any basepoint for $G$ must have at least three local components
    and hence at least three neighbors.
    Thus since exterior components contain only 2 vertices by Lemma \ref{2pts},
    $C$ cannot be exterior.  Dually, $z$ is contained in a local
    component $D$ at $y$, which, dually, is not exterior.  Since $z$ has at least three local
    components, there is a local component $A$ at $z$ which is
    disjoint from $C$.  That is, $z$ is adjacent to vertices not adjacent to $y$.
    But that means, $D$ is an exterior component at $y$, a contradiction.
 \end{proof}

 \begin{thm}
 \label{sideclique}
 Suppose $G$ is balanced and $p$-critical. Let $z$ be the basepoint of $G$.

 a) If there is at most one exterior component at $z$, then there
 are at least two local components at $z$ which are cliques and have maximum order
 among the local components.

 b) If there is no exterior component at $z$, then there
 are at least three local components at $z$ which are cliques and have maximum order
 among the local components.
  \end{thm}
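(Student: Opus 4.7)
The plan is to argue both parts by contradiction, removing a single carefully chosen vertex from $G$ to produce a proper induced subgraph $H$ whose impropriety remains at least $p$, contradicting $p$-criticality. First I would prove an auxiliary fact about minimal representations: in any minimal representation $\alpha$ of a balanced interval graph $G$ with basepoint $z$, the two end components of $z$ in $\alpha$ (leftmost and rightmost in its layout) attain the top non-exterior orders of $z$, and any non-exterior end component is a clique. The derivation is a single chain of inequalities: the middle $n-2$ local components are pinned inside $I_z$ and so contribute their total order to $\imp_\alpha(z)$, which lies between $\wt(z)=p$ and $p$; hence the middle sum equals $p$ (so the middle is the $n-2$ smallest non-exterior components) and no vertex of an end component has its interval inside $I_z$. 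For a non-exterior end this forces every interval to extend past the nearer endpoint of $I_z$, so all these intervals share that endpoint and the component is a clique. This already produces one max-order clique on the non-exterior end in part (a) with one exterior, and two max-order cliques in part (b).

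Next I would show that the maximum non-exterior order $M$ is attained by enough components: at least two in part (a), at least three in part (b). If $M\le 2$ every order-$M$ component is automatically a clique, so assume $M\ge 3$. In part (a), suppose only a single component $L_k$ has order $M$; pick a non-cut vertex $w$ of $L_k$ and form $H=G-w$. A short recount shows $\wt_H(z)=\wt_G(z)=p$: the top order drops to $M-1$ but either remains at the top or ties with the previous second-largest, leaving the sum of the $n-2$ smallest orders unchanged. Hence $\imp(H)\ge p$, contradicting $p$-criticality. The same argument applied to two tied max components rules out exactly two maxima in part (b), forcing at least three.

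The main obstacle is showing that enough of these max components are cliques. Suppose for contradiction that part (a) admits at most one max-order clique, or that part (b) admits at most two. Pick a max-order clique $L^*$ if any exists, else any max-order component, and form $H=G-w$ by removing a non-cut vertex $w$ of the chosen component; a routine count gives $\wt_H(z)=p-1$. If $\imp(H)<p$ then $\imp(H)=\wt_H(z)=p-1$, so $z$ remains a basepoint of $H$, and the auxiliary fact applied to any minimal representation of $H$ forces its non-exterior ends to be cliques of $H$ having the top non-exterior order in $H$, which is still $M$. But by the choice of $w$ too few order-$M$ cliques survive in $H$: zero survive in part (a) and in the zero- or one-clique subcases of part (b), while only one survives in the two-clique subcase of part (b), although two are needed. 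This contradicts the existence of such a minimal representation of $H$, so $\imp(H)\ge p$, contradicting $p$-criticality. The bookkeeping across the subcases is the delicate part, but each collapses to the same count of surviving max cliques in $H$.
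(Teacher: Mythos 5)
Your proposal is correct, and its first stage (the ``auxiliary fact'') is exactly the paper's opening move: in a minimal representation the $n-2$ middle local components are trapped in $I_z$, their orders sum to exactly $\wt(z)=\imp(G)$, so the two end components are the exterior ones together with the largest non-exterior ones, and every non-exterior end component is a clique because all of its intervals must cover the nearer endpoint of $I_z$. Where you genuinely diverge is in producing the \emph{remaining} max-order cliques. The paper fixes a vertex $x$ in the max-order side clique $A$ and analyzes how a representation of $G\setminus\{x\}$ could have smaller impropriety: it argues (somewhat informally) that the only productive rearrangement is to swap an inner component $C$ with $A\setminus\{x\}$, and the count $n-(m-1)=1$ forces $|C|=|A|$ and $C$ a clique; this directly exhibits an inner max-order clique, and in case (b) the other side clique $B$ is forced to have maximum order as well. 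You instead run two deletion arguments against criticality: first, if too few components attain the maximum order $M$, deleting a non-cut vertex of a maximal component leaves $\wt(z)$ equal to $p$, so $\imp(H)\ge p$ by Theorem~\ref{wt}, contradiction; second, if too few of the order-$M$ components are cliques, deleting a vertex drops the weight to exactly $p-1$, criticality then forces $\imp(H)=p-1$, so $H$ is again balanced with basepoint $z$, and reapplying the auxiliary fact to a minimal representation of $H$ demands more surviving order-$M$ cliques than you left standing. Your route buys rigor: it replaces the paper's informal catalogue of ``all possible rearrangements'' of a representation of $G\setminus\{x\}$ (e.g.\ the unargued claims that permuting inner components cannot help and that a swap with the exterior side would already have helped in $G$) with two clean applications of the weight lower bound and the minimal-representation lemma; the cost is that it is purely by contradiction and does not locate the extra clique as an inner component, and you should spell out the small cases you wave at ($M\le 2$, where order-$M$ components are automatically cliques, and the ``exactly one maximum'' case in part (b) alongside ``exactly two''). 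These are bookkeeping, not gaps.
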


 \begin{proof}
  Select a minimal representation $\a$ of $G$.
 As in the proof of Theorem \ref{wt}, look at the supports of the
 local components.  These are disjoint intervals, ordered from left
 to right.  Call the leftmost and rightmost components the {\em side
 components.}  The other components are {\em inner components}.
 By hypothesis, at most one local component can be exterior, so at least one of
 the side components is non-exterior.  Call such a component $A$.
 For concreteness, suppose $A$ in on the right side.
 The weight is determined by adding the orders of the non-side
 components.  Since $\a$ is minimal and $G$ is balanced, the
 impropriety equals the sum of the orders of the inner components.
 Hence $A$ cannot contribute to the impropriety.  Now consider $v \in A$.
 Since $A$ is not exterior $I_v \cap I_z \ne \emptyset$.
 Thus $\ell_v \le r_z$.  Since $A$ does not contribute to the impropriety,
 $I_v$ is not contained in $I_z$.  Since $A$ is on the right side,
 this says $r_z < r_v$.  Combining these inequalities, we find $r_z \in I_v$ for all $v \in A$, so $A$ is a clique.

 If there are no exterior components, the above argument shows that
 both the right and left side components must be cliques.

 Now let $A$ and $B$ the side components.  If one of these is
 exterior, by symmetry it may be assumed to be $B$.  Thus from the
 way that weight is defined and because $\a$ is a minimal representation, it follows that $A$ is a component of
 maximum order.  If there are no exterior components, then, by symmetry, $A$ can be
 assumed to have order greater than or equal to $B$.  Thus in
 either case, we can assume that $A$ is local component of maximum order.

 Suppose $x \in A$. Since $G$ is $p$-critical, it follows that removing $x$ will decrease the impropriety.
 That is, we need to find a representation of $G\setminus\{x\}$
 which has a lower impropriety.  Any representation consists of the local components strung out in some order along $I_z$.
 Rearranging the inner components among themselves or changing the
 way they are represented will not decrease the number of intervals
 contained in $I_z$.  Thus some inner component must trade places
 with one of the two side components.  If exchanging an inner component for $B$ has
 a helpful effect, this helpful effect would be present even if $x$ is left in $A$.
 That is, this move could be used to give a representation for $G$ with a
 smaller impropriety, contrary to the minimality of $\a$.
 Thus the essential move is exchanging an inner component $C$ for $A \setminus \{x\}$.

 Suppose $A$ has order $m$ and $C$ has order $n$.
 This exchange increases the number of intervals contained in $I_z$
 by $m-1$ and decreases it by at most $n$.  The inequality here arises
 if $C$ is not a clique, so that some of its intervals must
 intersect $I_z$ while avoiding other intervals from $C$.
 This would force some intervals arising from $C$ to be wholly contained in $I_z$.

 Now $n \le m$ since $A$ has maximum order.  The decrease $d$ in
 impropriety thus satisfies $d = n-(m-1) \le 1$.  Conversely, $d \ge 1$ since $G$ is $p$-critical.
 Thus $n-(m-1) = d = 1$, so $n = m$. And this occurs iff
 all intervals in $C$ can be moved out of $I_z$ --- that is, $C$ is a clique.

 Thus we have shown that there must be one side component $A$ that has
 maximum order and is a clique.  Moreover, there must be an inner component $C$
 that has maximum order and is a clique.  If the type is 0, then $B$ exists
 and, as shown above, $B$ must be a clique.  If it is not of maximum order,
 interchanging $B$ and $C$ would reduce the impropriety of the
 representation, contrary to the assumption that $\a$ is maximal.
 \end{proof}


 \section{Construction of Balanced Interval Graphs}

 Let $z$ denote an isolated vertex.  Let $\cH := H_1, H_2, H_3, \dots, H_n$ denote a sequence of interval graphs.
 Let $\BAL_0(\cH)$ denote the join of $z$ with
 the disjoint union of the $H_i$.  That is, $z$ is made adjacent to all vertices in all of the $H_i$.
 This is clearly an interval graph: represent $z$ by a long interval
 and draw representations of the $H_i$ in disjoint subintervals of this long interval.
 A {\em pendant $P_3$} at $z$ is a path $xyz$ such that $y$ is adjacent only to $z$ and $x$
 and $x$ is adjacent only to $y$.
 If in addition the maximum order of the $\cH_i$ is at least 2,
 $\BAL_k(\cH)$ denotes $\BAL_0(\cH)$ with $k \ge 1$ pendant $P_3$'s attached to $z$.

 \begin{thm}
 \label{balchar}
 A graph $G$ is $p$-critical and balanced iff

 a) $G$ is isomorphic to $\BAL_0(\cH)$ where
 three of the $H_i$ having maximum order are cliques;

 b) $G$ is isomorphic to $\BAL_1(\cH)$ where
 two of the $H_i$ having maximum order are cliques;

 c) $G$ is isomorphic to $\BAL_2(\cH)$ for interval graphs $H_i$.
 \end{thm}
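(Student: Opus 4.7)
The plan proceeds by proving both directions, drawing on the structural results already established in Section 3.

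For the forward direction, suppose $G$ is balanced and $p$-critical. Theorem \ref{unibase} gives a unique basepoint $z$; Lemma \ref{lkkk} bounds the number $k$ of exterior components at $z$ by $2$; Theorem \ref{2pts} shows each exterior component consists of two vertices, which together with the definition of local components forces each such component to be a pendant $P_3$ at $z$. Every non-exterior local component $H_i$ lies entirely in $N(z)$, so $G \cong \BAL_k(H_1,\ldots,H_m)$. Theorem \ref{sideclique}(b) delivers the three max-order cliques demanded in case (a); Theorem \ref{sideclique}(a) delivers the two demanded in case (b); case (c) requires no clique condition. Whenever $k \ge 1$, it also remains to verify that some $H_i$ has order $\ge 2$: if every $H_i$ were a singleton, then deleting the outer vertex of any exterior $P_3$ would merely convert that pair into a non-exterior singleton local component, leaving the weight of $z$ unchanged, and hence by balance leaving the impropriety unchanged, contradicting $p$-criticality.

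For the reverse direction, given $G = \BAL_k(\cH)$ in one of the three configurations, I would first exhibit a representation of impropriety $\wt(z)$, which by Corollary \ref{wtcor} will prove balance. Draw $I_z$ as a long interval and designate two side components: in (a), two of the guaranteed max-order cliques; in (b), the max-order clique and the single pendant $P_3$; in (c), the two pendant $P_3$'s. A clique side component is drawn as a bundle of intervals all straddling an endpoint of $I_z$, while a pendant side component places the inner vertex across the endpoint and the outer vertex wholly outside $I_z$. The remaining $H_i$ are drawn in disjoint slots strictly inside $I_z$, each using a recursively chosen representation. The intervals contained in $I_z$ then sum to exactly $\wt(z)$.

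It remains to verify $p$-criticality by showing that the deletion of any vertex strictly reduces the impropriety. The easy cases are deleting a vertex in some interior $H_i$, which drops the canonical representation's count by one, and deleting $z$ itself, after which the graph splits into the $H_i$ and pendant tails, each with impropriety strictly less than $\wt(z)$ (since $\imp(H_i) \le |H_i|-1$ while $\wt(z)$ sums at least one max-order term). For a pendant $P_3$ vertex, either the outer vertex disappears (turning an exterior pair into a non-exterior singleton) or the middle vertex disappears (reducing to a graph with one fewer exterior component); in both situations the hypothesis that some $H_i$ has order $\ge 2$ forces the new weight strictly below the old. The main obstacle is the deletion of a vertex $x$ lying in a side component: here one must rearrange the representation, swapping an interior max-order clique (guaranteed by Theorem \ref{sideclique}) into the vacated side slot, so that the total number of intervals contained in $I_z$ decreases by exactly one. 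This is precisely the trade-off analyzed in the proof of Theorem \ref{sideclique}, and it is exactly where the clique and max-order hypotheses of (a) and (b) are used.
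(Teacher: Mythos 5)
Your overall architecture matches the paper's: necessity via Theorems \ref{2pts}, \ref{unibase}, and \ref{sideclique} (plus Lemma \ref{lkkk}), and sufficiency via an explicit representation achieving $\wt(z)$ followed by a case analysis of single-vertex deletions. You even supply two details the paper leaves implicit --- the verification that some $H_i$ has order at least $2$ when $k\ge 1$ (needed for $\BAL_k$ to be defined), and the deletion of $z$ itself --- and both of those arguments are sound.

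There is, however, one genuine gap in your criticality check, in the case of deleting a vertex of a pendant $P_3$. You conclude by showing that ``the new weight is strictly below the old.'' That is not enough: by Corollary \ref{wtcor} the weight is only a \emph{lower bound} on the impropriety, so a drop in weight says nothing about a drop in impropriety. Indeed, after deleting the middle vertex of a pendant $P_3$ the resulting graph need not be balanced (e.g.\ $\BAL_2(\cH)$ with $\cH=\{P_3\}$ has weight and impropriety $3$ at $z$, but after the deletion the weight of $z$ drops to $0$ while the impropriety drops only to $1$), so you cannot pass from weight to impropriety. What is needed --- and what the paper does --- is a representation argument: when the middle vertex $y$ is removed, slide a maximum-order $H_n$ into the vacated end slot so that its intervals escape $I_z$; when the outer vertex $x$ is removed, exchange $I_y$ (now contributing $1$ to the count inside $I_z$) for $H_n$, a net change of $1-|H_n|<0$ since $|H_n|\ge 2$. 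A smaller point: in the side-component deletion case you cite Theorem \ref{sideclique} as ``guaranteeing'' the interior max-order clique, but that theorem's hypothesis is that $G$ is already $p$-critical, which is what you are trying to prove; the interior clique must instead come directly from the clique hypotheses of cases (a) and (b), as you correctly acknowledge in your closing sentence.
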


 \begin{proof} If $G$ is $p$-critical and balanced, then by Theorems 3.1 and 3.3,
 $G$ has the form specified above.  For the converse, suppose
 $G$ has the form specified above. It is convenient to assume that $\cH := H_1, H_2, H_3, \dots, H_n$
 is ordered so that $|H_i| \le |H_{i+1}|$ and among the $H_i$ of
 maximum order, the cliques come last.

 If $k = 2$, construct a representation $\a$ of $G = \BAL_2(\cH)$ by putting the two pendant $P_3$'s
 at either ends of a long interval $I_z$ for $z$. Represent the $H_i$ inside smaller subintervals
 of $I_z$.  The weight of $z$ in $G = \BAL_2(\cH)$ is clearly $\S := \sum_{i = 1}^n |H_i|$.
 This is also the impropriety of $z$ in the representation $\a$.
 Thus $\S = \wt(z) \le \wt(G) \le \imp(G) \le \imp(\a) = \S$.
 Therefore, $\wt(G) = \imp(G)$, so $\BAL_2$-graphs are balanced.

 To show $\BAL_2$-graphs are critical, it suffices to show that if
 any interval from the representation $\a$ is removed, then
 the remaining intervals can be rearranged to reduce the impropriety.
 An inner interval contributes directly to the impropriety, so its
 removal reduces the impropriety.  Thus consider a pendant $P_3$ $xyz$.
 If $y$ is removed, then $H_n$ can be moved to where $I_y$ was.  This decreases the impropriety by $|H_n|$.
 If $x$ is removed, then the interval $I_y$ for $y$ can be exchanged for
 $H_n$.  This reduces the impropriety by $|H_n|-1$. But $|H_n|$ is maximal,
 and by definition of $\BAL_2$, there is a local component with at
 least two vertices.  Thus $|H_n|-1 > 0$, so the impropriety does go down.

 If $k = 1$, put the pendant $P_3$ to the left of a long interval $I_z$ for $z$.
 Put small intervals for $H_n$, all containing the right endpoint of $I_z$.
 As before, represent the remaining $H_i$ in smaller intervals contained in $I_z$.
 The weight of $z$ in $G$ is $\sum_{i = 1}^{n-1} |H_i|$.
 This is again $\imp(\a)$.  As in the case $k=2$, this implies $\BAL_1$-graphs are balanced.

 In showing criticality, pendant $P_3$'s and inner intervals can be treated the same way as for $k = 2$.
 If a vertex is removed from $H_n$, then we can exchange $H_n$ for
 $H_{n-1}$ which is an interior clique of the same order as $H_n$ by hypothesis.
 This reduces the impropriety by 1.

 If $k = 0$, $H_n$ and $H_{n-1}$ go on the ends. Removing an
 interior interval obviously reduces the impropriety as before.
 If an interval is removed from one of the end clique components, it can be exchanged for $H_{n-2}$.
 \end{proof}


\end{document}